\newtheorem{theorem}{Theorem}
\newtheorem{lemma}[theorem]{Lemma}
\newtheorem{corollary}[theorem]{Corollary}
\newtheorem{proposition}[theorem]{Proposition}
\theoremstyle{definition}
\newtheorem{definition}{Definition}
\DeclareMathOperator\interior{int}
\DeclareMathOperator\domain{dom}
\DeclareMathOperator\graph{gph}
\DeclareMathOperator\epigraph{epi}
\let\emptyset\varnothing
\newcommand\Def[1]{\ensuremath{\overset{\Delta}{#1}}}
\title{Some notes on continuity in convex optimization}
\author{Torbjørn Cunis}
\date{}
\begin{document}
\maketitle

We discuss sufficient conditions under which a convex parametrized optimization problem
\begin{align*}
	v: y \mapsto \min_x \phi(x, y) \quad \text{subject to $x \in F(y)$}
\end{align*}
where $x \in \mathcal X$ is the decision variable, $\phi$ is a convex cost function, and $F$ is a convex closed feasible set mapping, is continuous in the parameter $y \in \mathcal Y$.

\subsection*{Convexity (Real-valued case)}
Let $\mathcal X$ and $\mathcal Y$ be Euclidean vector spaces, that is, $\mathcal X = \mathbb R^n$ and $\mathcal Y = \mathbb R^m$ for $n,m > 0$ and $\phi: \mathbb R^n \times \mathbb R^m \to \mathbb R$ as well as $F: \mathbb R^m \rightrightarrows \mathbb R^n$. Recall that $\phi$ is a convex function and $F$ is closed convex set-valued mapping, that is, $\epigraph \phi$ and $\graph F$ are closed convex sets, where, as usual,
\begin{align*}
	\epigraph \phi &= \{ (x, y, \mu) \in \mathbb R^{n + m + 1} \, | \, \mu \geq \phi(x, y) \} \\
	\graph F &= \{ (x, y) \in \mathbb R^{n + m} \, | \, x \in F(y) \}.
\end{align*}

\paragraph*{Optimal value mapping}
We introduce the feasible cost space
\begin{align}
	F_\phi: y \mapsto \{ \mu \in \mathbb R \, | \, \exists x \in F(y), \mu \geq \phi(x, y) \}
\end{align}
and the associated auxiliary optimization problem
\begin{align}
	v_\phi: y \mapsto \min \mu \quad \text{subject to $\mu \in F_\phi(y)$}
\end{align}
and we are going to prove that $v_\phi$ is an equivalent convex optimization problem.

\begin{proposition}
	$F_\phi$ is a convex set-valued mapping.
\end{proposition}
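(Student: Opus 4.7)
The plan is to prove convexity of the set-valued mapping $F_\phi$ by showing that its graph
\[
    \graph F_\phi = \{ (y,\mu) \in \mathbb R^{m+1} \,|\, \exists x \in F(y),\, \mu \geq \phi(x,y) \}
\]
is a convex subset of $\mathbb R^{m+1}$. I would pick two arbitrary points $(y_0,\mu_0),(y_1,\mu_1) \in \graph F_\phi$ and a weight $\lambda \in [0,1]$, and then exhibit an explicit witness $x_\lambda$ showing that the convex combination $(y_\lambda,\mu_\lambda) = (1-\lambda)(y_0,\mu_0) + \lambda(y_1,\mu_1)$ again lies in $\graph F_\phi$.

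The natural choice of witness is $x_\lambda = (1-\lambda)x_0 + \lambda x_1$, where $x_i \in F(y_i)$ are the feasible points guaranteed by $(y_i,\mu_i) \in \graph F_\phi$ with $\mu_i \geq \phi(x_i,y_i)$. There are two things to verify about this $x_\lambda$. First, that $x_\lambda \in F(y_\lambda)$: this follows directly from the hypothesis that $\graph F$ is convex, since $(x_i,y_i) \in \graph F$ and $(x_\lambda,y_\lambda)$ is their convex combination. Second, that $\mu_\lambda \geq \phi(x_\lambda,y_\lambda)$: this will follow from convexity of $\phi$ applied jointly in $(x,y)$, which is built into the assumption that $\epigraph \phi$ is convex. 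Indeed, the points $(x_i,y_i,\mu_i) \in \epigraph \phi$ convex-combine to $(x_\lambda,y_\lambda,\mu_\lambda)$, giving $\phi(x_\lambda,y_\lambda) \leq \mu_\lambda$ directly.

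Combining these two facts, $(y_\lambda,\mu_\lambda) \in \graph F_\phi$ with witness $x_\lambda$, and the proof concludes. I do not anticipate any serious obstacle: the argument is essentially bookkeeping that converts the convexity assumptions on $\graph F$ and $\epigraph \phi$ into the desired statement about $\graph F_\phi$. The only mild subtlety is remembering to interpolate $x$ linearly and to use joint convexity of $\phi$ in $(x,y)$ (as encoded in the epigraph) rather than separate convexity in each argument; if one only had partial convexity, the middle inequality would fail.
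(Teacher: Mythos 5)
Your proof is correct. It reaches the same conclusion from the same two hypotheses (convexity of $\graph F$ and of $\epigraph \phi$), but packages the argument differently: the paper forms the convex set
\[
	G = \epigraph \phi \cap (\graph F \times \mathbb R)
\]
as an intersection of convex sets and then observes that $\graph F_\phi$ is the image of $G$ under the projection $(x,y,\mu) \mapsto (y,\mu)$, invoking the standard facts that intersections and linear images of convex sets are convex. Your explicit witness $x_\lambda = (1-\lambda)x_0 + \lambda x_1$ is precisely what that projection argument hides: unpacking ``the projection of a convex set is convex'' for the set $G$ yields your computation verbatim. The paper's version is more modular and shorter on the page (and generalizes immediately if one replaces the projection by any linear map); yours is more elementary, works directly from the definitions, and makes visible where joint convexity of $\phi$ in $(x,y)$ enters --- a point you rightly flag, since separate convexity in each argument would not suffice. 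As a side benefit, your phrasing avoids the paper's slip of describing the projection as being ``onto $\mathcal X$'' when it is in fact onto $\mathcal Y \times \mathbb R$.
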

\begin{proof}
We prove that $\graph F_\phi$ is a convex set; recall that
\begin{align*}
	\graph F_\phi = \{ (y, \mu) \, | \, \exists x \in F(y), \mu \geq \phi(x, y) \}.
\end{align*}
%
%
Since $\phi$ is convex, a fortiori lower semi-continuous, $\epigraph \phi$ is convex (and closed) by virtue of the closed graph theorem. As $F$ has convex graph, we have that
\begin{align*}
	G = \epigraph \phi \cap (\graph F \times \mathbb R) = \{ (x, y, \mu) \in \mathbb R^{n + m + 1} \, | \, x \in F(y), \mu \geq \phi(x, y) \}
\end{align*}
is convex, too; $\graph F_\phi$ is the projection of $G$ onto $\mathcal X$, therefore a convex set.
\end{proof}

Recall that $v$ and $v_\phi$ are functions onto the {\em extended} real line $\overline {\mathbb R} \Def= \mathbb R \cup \{\pm \infty\}$ if, as usual, we associate $v(y) = +\infty$ in case that $F(y)$ is empty and $v(y) = -\infty$ in case that $\phi(\cdot, y)$ does not have a minimum on $F(y) \neq \emptyset$. The (effective) domain $\domain v$ is defined as the set of parameters $y \in \mathcal Y$ such that $v(y) < +\infty$ and the function $v$ is called {\em proper} on $Y \subset \mathcal Y$ if it has non-empty domain and $v(y) > -\infty$ for all $y \in Y$.
In the context of optimization, we may also call $\domain v$ the {\em feasible} domain. It follows immediately from the definitions that $v$ (resp., $v_\phi$) is feasible exactly on the domain of $F$ (resp., of $F_\phi$), and that $\domain F = \domain F_\phi$. 

\begin{definition}
	A function $f: \mathcal Y \rightrightarrows \mathbb R$ is {\em locally bounded from below} at $y_0 \in \domain f$ if and only if there exists a constant $m \in \mathbb R$ such that
	\begin{align*}
		\forall \mu \in f(A), \quad m \leq \mu
	\end{align*}
	for some open neighbourhood $A \subset \mathcal Y$ of $y_0$.
\end{definition}

\begin{theorem}
	Let $y_0 \in \interior \domain F_\phi$; the following statements are equivalent:
	\begin{enumerate}
		\item\label{item:dualfeas-bounded} $F_\phi$ is locally bounded from below at $y_0$;
		\item\label{item:dualfeas-feasdom-aux} $v_\phi(\cdot) > -\infty$ on a neighbourhood of $y_0$; 
		\item\label{item:dualfeas-feasdom} $v(\cdot) > -\infty$ on a neighbourhood of $y_0$; 
	\end{enumerate}
	if $F_\phi$ is closed-valued in the neighbourhood of $y_0$.
\end{theorem}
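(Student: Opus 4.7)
My approach is to run the cycle (\ref{item:dualfeas-bounded}) $\Rightarrow$ (\ref{item:dualfeas-feasdom-aux}) $\Rightarrow$ (\ref{item:dualfeas-feasdom}) $\Rightarrow$ (\ref{item:dualfeas-bounded}) after first extracting two structural observations from the closed-valued hypothesis. Let $U$ be a neighbourhood of $y_0$ on which $F_\phi$ is closed-valued and which is contained in $\interior \domain F_\phi$. For each $y \in U$, the set $F_\phi(y)$ is a nonempty closed convex subset of $\mathbb{R}$ that is upward-closed by construction; hence $F_\phi(y) = [v_\phi(y), +\infty)$ when $v_\phi(y) > -\infty$, and $F_\phi(y) = \mathbb{R}$ when $v_\phi(y) = -\infty$. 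The same case analysis shows $v \equiv v_\phi$ on $U$: a finite infimum of $F_\phi(y)$ must lie in $F_\phi(y)$ and is therefore realised by some $x \in F(y)$ as $\phi(x,y)$, so $\phi(\cdot,y)$ attains its minimum on $F(y)$; and if the infimum is $-\infty$, then (since $\phi$ is real-valued) neither problem possesses a minimum and both values are $-\infty$ by convention.

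With these in hand, (\ref{item:dualfeas-bounded}) $\Rightarrow$ (\ref{item:dualfeas-feasdom-aux}) is immediate: a uniform lower bound $m$ for $F_\phi$ on a neighbourhood $A$ of $y_0$ forces $v_\phi(y) = \inf F_\phi(y) \geq m$ for every $y \in A$. The implication (\ref{item:dualfeas-feasdom-aux}) $\Rightarrow$ (\ref{item:dualfeas-feasdom}) is nothing but the identification $v = v_\phi$ on $U$. It remains to prove the reverse direction (\ref{item:dualfeas-feasdom}) $\Rightarrow$ (\ref{item:dualfeas-bounded}), which carries the real content.

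For this step I would invoke convexity of $v_\phi$, inherited from the convexity of $\graph F_\phi$ established in the preceding proposition (the projection of a convex set is convex, so $\epigraph v_\phi$ is convex). From (\ref{item:dualfeas-feasdom}) together with $v = v_\phi$ on $U$, $v_\phi(y) > -\infty$ for $y$ in an open neighbourhood $A \subseteq U$ of $y_0$; and since $y_0 \in \interior \domain F_\phi = \interior \domain v_\phi$, after possibly shrinking $A$ we also have $v_\phi(y) < +\infty$ throughout $A$. Thus $v_\phi$ is a finite convex function on an open subset of $\mathbb{R}^m$, and the standard continuity theorem for convex functions on the interior of their effective domain delivers continuity, and a fortiori local boundedness from below, at $y_0$. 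The resulting lower bound on $v_\phi$ over some smaller $A' \subseteq A$ transfers, via $F_\phi(y) = [v_\phi(y), +\infty)$, to a uniform lower bound on $F_\phi(A')$, which is exactly (\ref{item:dualfeas-bounded}). The main obstacle — and where convexity does the decisive work — is this invocation of continuity of convex functions on the interior of their effective domain; everything else is bookkeeping around the closed-valued hypothesis and the convention for non-attained infima.
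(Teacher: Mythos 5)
Your proof is correct and follows the same decomposition as the paper: the cycle $(1) \Rightarrow (2) \Rightarrow (3) \Rightarrow (1)$, with the first implication resting on closedness of $F_\phi(y)$ and the second amounting to the identity $v = v_\phi$ on the closed-valued neighbourhood (the paper phrases this as a contradiction argument with the attained minimum $\mu^*$, but the content is the same). The one place you genuinely diverge is the implication $(3) \Rightarrow (1)$: the paper asserts that a uniform lower bound $m \leq v(y)$ on a bounded neighbourhood $A$ exists ``by boundedness of $A$,'' which as written does not follow --- a real-valued function on a bounded open set need not be bounded below --- whereas your explicit appeal to convexity of $v_\phi$ and to the continuity (hence local boundedness) of finite convex functions on open subsets of $\mathbb{R}^m$ supplies exactly the ingredient the paper leaves implicit, so your version of this step is the more rigorous one. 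One small wording correction: $\epigraph v_\phi$ is not obtained from $\graph F_\phi$ by a projection; rather the two sets coincide over the neighbourhood $U$ precisely because $F_\phi(y) = [v_\phi(y), +\infty)$ there (this is the paper's later proposition identifying $\graph F_\phi$ with $\epigraph v_\phi$), and that coincidence is what makes $v_\phi$ convex on $U$ and lets you transfer the lower bound on $v_\phi$ back to a lower bound on $F_\phi(A')$.
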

\begin{proof}
	For the implication from \ref{item:dualfeas-bounded} to \ref{item:dualfeas-feasdom-aux}, let $F_\phi$ be locally bounded from below at $y_0$ with constant $m > -\infty$ and neighbourhood $A \subset \mathcal Y$ and assume without loss of generality that $A \subset \domain F_\phi$. For any $y \in A$, we then have that $\emptyset \subsetneq F_\phi(y) \subset [m, +\infty)$ and, since $F_\phi(y)$ is closed, $v_\phi(y) \geq m$. 
	
	We show the implication from \ref{item:dualfeas-feasdom-aux} to \ref{item:dualfeas-feasdom} by contradiction: let $y$ be in the neighbourhood of $y_0$ such that $v_\phi(y) > -\infty$; denote the minimum by $\mu^*$. Assume that $v(y) = -\infty$, that is, for any $x \in F(y_0)$ there exists $x'$ such that $\phi(x', y) < \phi(x, y)$. By definition, however, $\mu^* \geq \phi(x, y)$ for some $x \in F(y)$ and therefore, $\mu' = \phi(x', y) < \phi(x, y) \leq \mu^*$. This contradicts the assumption that $\mu^*$ is the minimum of $F_\phi(y)$ and hence, $v(y) > -\infty$.
	
	For the remaining implication, from \ref{item:dualfeas-feasdom} to \ref{item:dualfeas-bounded}, let $A \subset \domain v$ be a bounded open neighbourhood of $y_0$ such that $v(A) \subset \mathbb R$; then by boundedness of $A$, there exists a constant $m \in \mathbb R$ such that $m \leq v(y)$ for all $y \in A$. In other words, $\phi(x, y) \geq m$ for any $x \in F(y)$ and hence, $\mu \geq  m$ for any $\mu \in F_\phi(y)$. That is, $F_\phi$ is locally bounded from below at $y_0$.
\end{proof}

We leave it to the reader to convince themselves that $v_\phi(y_0) > -\infty$ if and only if $v(y_0) > -\infty$ for $y_0 \in \domain F_\phi$.

\begin{proposition}
	The optimization problems $v$ and $v_\phi$ are equivalent.
\end{proposition}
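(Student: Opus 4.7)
The plan is to establish pointwise equality $v(y) = v_\phi(y)$ for every $y \in \mathcal Y$, together with a correspondence between minimizers, which is what ``equivalent'' naturally means for parametric optimization problems. I would organize the argument by splitting on the value of $v(y)$ in $\overline{\mathbb R}$, using only the definitions of $F_\phi$ and $v_\phi$; no convexity is needed here.

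First I would dispose of the infeasible case: if $y \notin \domain F$, then $F(y) = \emptyset$ forces $F_\phi(y) = \emptyset$ as well (the two domains coincide, as already noted in the excerpt), so both $v(y)$ and $v_\phi(y)$ equal $+\infty$ by the convention on extended-real values.

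Next, for $y \in \domain F$, I would prove the two inequalities in infimum form and then upgrade them to minima. For $v_\phi(y) \leq v(y)$: every $x \in F(y)$ yields $\phi(x,y) \in F_\phi(y)$ by construction, hence $v_\phi(y) \leq \phi(x,y)$, and taking the infimum over $x \in F(y)$ gives the claim. For $v(y) \leq v_\phi(y)$: any $\mu \in F_\phi(y)$ admits some $x \in F(y)$ with $\mu \geq \phi(x,y) \geq v(y)$, so taking the infimum over $\mu$ yields $v_\phi(y) \geq v(y)$. Thus $v(y) = v_\phi(y)$ in $\overline{\mathbb R}$; in particular, $v(y) = -\infty$ if and only if $v_\phi(y) = -\infty$, and they are simultaneously finite.

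Finally, for the correspondence of minimizers when the common value is finite: if $x^* \in F(y)$ attains $v(y)$, then $\mu^* := \phi(x^*, y) \in F_\phi(y)$ satisfies $\mu^* = v(y) = v_\phi(y)$ and hence attains $v_\phi(y)$. Conversely, if $\mu^* \in F_\phi(y)$ attains $v_\phi(y)$, pick $x \in F(y)$ with $\mu^* \geq \phi(x,y)$; since $\phi(x,y)$ lies in $F_\phi(y)$, minimality forces $\mu^* = \phi(x,y) = v(y)$, so $x$ attains $v(y)$. There is no real obstacle; the only subtlety to be careful about is the bookkeeping around the extended-real conventions introduced in the excerpt, in particular aligning the case $v(y) = -\infty$ (non-existence of a minimum) with $v_\phi(y) = -\infty$, which follows directly from the equality of infima established above.
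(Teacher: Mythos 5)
Your proof is correct and follows essentially the same route as the paper's: both directions rest on the observations that $\phi(x,y) \in F_\phi(y)$ for every feasible $x$ (giving $v_\phi(y) \leq v(y)$) and that any $\mu \in F_\phi(y)$ admits a witness $x \in F(y)$ with $\phi(x,y) \leq \mu$ (giving $v(y) \leq v_\phi(y)$). You are somewhat more thorough than the paper, which argues only at points $\bar y$ where the respective minima are attained, whereas you additionally treat the infeasible case, the extended-real conventions, and the explicit correspondence of minimizers.
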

\begin{proof}
	Let $\bar y \in \domain v$ and $v(\bar y) = \phi(x_0, \bar y)$ for some $x_0 \in F(\bar y)$. Then there exists $\mu \in F_\phi(\bar y)$, namely $\mu = \phi(x_0, \bar y)$, and hence, $v_\phi(\bar y) \leq \mu = v(\bar y)$. Let now $\bar y \in \domain v_\phi$ and $v_\phi(\bar y) = \mu_0$. Then $\mu_0 \in F_\phi(\bar y)$, that is, there exists $x \in F(\bar y)$ with $\mu_0 \geq \phi(x, \bar y)$ and hence, $v(\bar y) \leq \mu_0 = v_\phi(\bar y)$, completing the proof.
\end{proof}

The feasible cost space characterises the auxiliary optimization problem:

\begin{proposition}
	The graph of $F_\phi$ is equal to the epigraph of $v_\phi$.
\end{proposition}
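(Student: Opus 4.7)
The plan is to establish the two set inclusions $\graph F_\phi \subseteq \epigraph v_\phi$ and $\epigraph v_\phi \subseteq \graph F_\phi$, using one crucial structural observation about $F_\phi$: for each fixed $y$, the set $F_\phi(y) \subseteq \mathbb R$ is \emph{upward closed}, meaning that if $\mu_0 \in F_\phi(y)$ then $[\mu_0, +\infty) \subseteq F_\phi(y)$. This is immediate from the definition: any witness $x \in F(y)$ with $\mu_0 \geq \phi(x,y)$ also witnesses every $\mu \geq \mu_0$, since then $\mu \geq \mu_0 \geq \phi(x,y)$.

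For the forward inclusion, I would take $(y,\mu) \in \graph F_\phi$, so by definition $\mu \in F_\phi(y)$. Then $v_\phi(y) = \min F_\phi(y) \leq \mu$, which places $(y,\mu)$ in $\epigraph v_\phi$.

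For the reverse inclusion, I would take $(y,\mu) \in \epigraph v_\phi$, so $\mu \geq v_\phi(y)$, and split into cases according to the value of $v_\phi(y) \in \overline{\mathbb R}$. If $v_\phi(y) = +\infty$, no such $\mu$ exists and the case is vacuous. If $v_\phi(y) \in \mathbb R$, then the minimum is attained, so $v_\phi(y) \in F_\phi(y)$, and upward-closedness yields $\mu \in F_\phi(y)$, i.e.\ $(y,\mu) \in \graph F_\phi$. If $v_\phi(y) = -\infty$, then for any $\mu \in \mathbb R$ there exists $\mu_0 \in F_\phi(y)$ with $\mu_0 \leq \mu$, so upward-closedness again gives $\mu \in F_\phi(y)$.

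There is no real obstacle here; the argument is essentially a direct unfolding of definitions once the upward-closed shape of $F_\phi(y)$ is made explicit. The only place needing mild care is handling the extended-real endpoints consistently with the paper's convention that $v_\phi(y) = +\infty$ precisely when $F_\phi(y) = \emptyset$.
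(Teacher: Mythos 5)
Your proof is correct and takes essentially the same route as the paper's: both directions are handled as set inclusions, with the reverse inclusion resting on the upward-closedness of $F_\phi(y)$, which the paper uses implicitly in the step ``$(y,\mu) \in \graph F_\phi$ since $(y,\mu_0) \in \graph F_\phi$.'' Your explicit statement of upward-closedness and your case split over $v_\phi(y) \in \overline{\mathbb R}$ merely spell out details the paper leaves tacit.
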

\begin{proof}
	To prove that $\graph F_\phi \subseteq \epigraph v_\phi$, let $\mu \in F_\phi(y)$ for $y \in \mathcal Y$; then $\mu \geq v_\phi(y)$ and hence, $(y, \mu) \in \epigraph v_\phi$. In order to prove that $\epigraph v_\phi \subseteq \graph F_\phi$, take $y \in \mathcal Y$; if $\mu \geq v_\phi(y)$ for some $\mu \in \mathbb R$, then there exists $\mu_0 \in F_\phi(y)$ with $v_\phi(y) \leq \mu_0 \leq \mu$ and $(y, \mu) \in \graph F_\phi$ since $(y, \mu_0) \in \graph F_\phi$.
\end{proof}

The main result of this section follows almost immediately.

\begin{theorem}
	Let $Y \subset \mathcal Y$ be a non-empty, open convex subset of the domain of $F$ and let $F_\phi$ be closed and locally bounded from below on $Y$; then $Y \subset \domain v$ and $v$ is a proper convex function on $Y$.
\end{theorem}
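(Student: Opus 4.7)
The plan is to combine the three preceding propositions (convexity of $F_\phi$, equivalence of $v$ and $v_\phi$, and $\graph F_\phi = \epigraph v_\phi$) with the equivalence theorem relating local boundedness from below to properness. The three conclusions — $Y \subset \domain v$, $v(\cdot) > -\infty$ on $Y$, and convexity of $v$ on $Y$ — then fall out almost mechanically.

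First I would dispatch feasibility and lower-boundedness simultaneously. Since $Y$ is open and contained in $\domain F = \domain F_\phi$, every $y_0 \in Y$ lies in $\interior \domain F_\phi$, so the hypotheses of the equivalence theorem are in force at each $y_0 \in Y$ (closed-valuedness and local boundedness from below are assumed on $Y$). Fixing such a $y_0$, the local lower bound $m$ gives $\emptyset \neq F_\phi(y_0) \subset [m,+\infty)$; because $F_\phi(y_0)$ is closed, the infimum is attained, so $v_\phi(y_0) \in \mathbb R$. By the equivalence of $v$ and $v_\phi$ (and the remark immediately preceding it, identifying $v(y_0)$ with $\inf F_\phi(y_0)$ on $\domain F_\phi$), we have $v(y_0) = v_\phi(y_0) \in \mathbb R$. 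Hence $y_0 \in \domain v$ and $v(y_0) > -\infty$. Since $Y \neq \emptyset$, this already establishes that $v$ is proper on $Y$.

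For convexity, I would appeal to the last two propositions: $F_\phi$ has convex graph, and $\graph F_\phi = \epigraph v_\phi$. Hence $\epigraph v_\phi$ is a convex subset of $\mathcal Y \times \mathbb R$, which is the defining property of a convex function $v_\phi$. Using again that $v$ and $v_\phi$ agree on $\domain F_\phi \supseteq Y$, convexity transfers to $v$: for $y_1,y_2 \in Y$ and $\lambda \in [0,1]$, the point $\lambda y_1 + (1-\lambda) y_2$ lies in $Y$ (since $Y$ is convex), and the convex combination of $(y_i, v(y_i)) \in \epigraph v_\phi$ yields the required inequality.

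I do not anticipate a genuine obstacle; the only point that deserves care is the identification $v(y_0) = v_\phi(y_0)$ as extended real numbers, which is the content of the remark following the equivalence theorem and is why closed-valuedness of $F_\phi$ on $Y$ is essential — it ensures the infimum in the auxiliary problem is attained, eliminating the gap between $v$ and $v_\phi$ at points where the original problem might not have a minimiser.
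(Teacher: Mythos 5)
Your proof is correct and follows essentially the same route as the paper: properness via the equivalence of $v$ and $v_\phi$ together with closedness and local lower-boundedness of $F_\phi$, and convexity via the identities $\epigraph v_\phi = \graph F_\phi$ and the convexity of $\graph F_\phi$. Your version is merely more explicit about why $y_0 \in \interior \domain F_\phi$ and why closed-valuedness guarantees attainment of the infimum, which the paper leaves implicit.
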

\begin{proof}
	Since $F_\phi$ is closed and locally bounded from below at every $y \in Y$, we have that $v(y) = v_\phi(y) > -\infty$. Furthermore, $y \in \domain v$ and $\graph F_\phi = \epigraph v_\phi$. As $\graph F_\phi$ is closed and convex, so is $\epigraph v$ and hence, $v$ is a proper convex function on $Y$.
\end{proof}

It is easy to see that $F_\phi$ would be locally bounded from below if (but not only if) either $\phi$ is bounded from below or $F$ has compact values.

\begin{lemma}
	Let $f: \mathcal Y \rightrightarrows \mathbb R$ be a convex function and $Y \subset \domain f$ be an open convex set. If $f$ is locally bounded from below at $y_0 \in Y$, then $f$ is  locally bounded on $Y$.
\end{lemma}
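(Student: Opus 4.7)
The plan is to use the pointwise hypothesis at $y_0$ together with convexity to first rule out $f = -\infty$ anywhere on $Y$, then deduce local upper boundedness from a standard simplex argument, and finally propagate the lower bound outward from $y_0$ by writing $y_0$ as a convex combination involving the point of interest.

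For Step 1, I would take any $y \in Y$, use openness of $Y$ at $y_0$ to find $y' \in Y$ with $y_0 = \lambda y + (1 - \lambda) y'$ for some $\lambda \in (0, 1)$, and apply convexity to obtain $f(y_0) \leq \lambda f(y) + (1 - \lambda) f(y')$. The local-boundedness-below hypothesis gives $f(y_0) > -\infty$, while $y, y' \in \domain f$ give $f(y), f(y') < +\infty$, so the only way the inequality can hold is $f(y) > -\infty$. Hence $f$ is real-valued on $Y$.

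For Step 2, at each $y \in Y$ I would inscribe a non-degenerate simplex with vertices $v_0, \dots, v_m \in Y$ around $y$ whose interior contains an open neighbourhood $U$ of $y$. Every point of $U$ is a convex combination of the $v_i$, so $f \leq M := \max_i f(v_i)$ on $U$, and $M$ is finite by Step 1.

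For Step 3, Step 2 applied at $y_0$ furnishes an open ball $B(y_0, \rho) \subset Y$ on which $f \leq M_0$. At any other $y \in Y$ I would fix $t \in (0, 1)$ close to $1$, and for each $y'$ in a small neighbourhood of $y$ set $y'' := (y_0 - (1 - t) y') / t$, so that $y_0 = (1 - t) y' + t y''$. For $t$ sufficiently close to $1$ and the neighbourhood of $y$ small enough, $y''$ stays inside $B(y_0, \rho)$, and therefore $f(y'') \leq M_0$. Convexity then rearranges $f(y_0) \leq (1 - t) f(y') + t f(y'')$ into $f(y') \geq (f(y_0) - t M_0) / (1 - t)$, a uniform finite lower bound on $y'$ near $y$.

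The main obstacle is the calibration in Step 3: the parameter $t$ and the neighbourhood of $y$ must be chosen together so that the reflected point $y''$ remains inside the upper-bounded region $B(y_0, \rho)$ uniformly in $y'$. The rest is routine extended-real bookkeeping in Step 1 and the standard simplex construction in Step 2.
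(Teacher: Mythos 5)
Your argument is written for the wrong object. In the paper, $f : \mathcal Y \rightrightarrows \mathbb R$ is a \emph{set-valued} mapping: its convexity means that $\graph f$ is a convex set, ``locally bounded from below'' quantifies over all values $\mu \in f(A)$, and $y \in \domain f$ means only that $f(y) \neq \emptyset$, not that ``$f(y) < +\infty$''. The intended instance is $f = F_\phi$, whose values are upward-closed intervals $F_\phi(y) = \{ \mu \, | \, \exists x \in F(y),\ \mu \geq \phi(x,y) \}$. For such a mapping your Step 2 is false: every nonempty value is unbounded above, so no simplex argument can produce $f \leq M$ on a neighbourhood, and $\max_i f(v_i)$ is not even defined. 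Expressions like $f(y_0) \leq \lambda f(y) + (1-\lambda) f(y')$ likewise do not make sense for sets. Consistently with this, the conclusion ``locally bounded on $Y$'' can only mean (and in the paper's own proof does mean) bounded \emph{from below}; two-sided boundedness is not derivable from the hypotheses, since they do not exclude $f = F_\phi$.

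That said, the key idea of the paper's proof is already contained in your Step 1 (and reappears in Step 3): write $y_0 = \beta y + (1-\beta) y'$ with $y'$ on the far side of $y_0$, inside the neighbourhood where the lower bound $m_0$ holds, and use convexity to contradict that bound if $f$ dips too low at $y$. Executed with elements instead of function values this \emph{is} the paper's argument: if $f(y)$ were unbounded below, pick $\mu \in f(y)$ arbitrarily negative and any $\mu' \in f(y')$; graph convexity places $\beta \mu + (1-\beta) \mu'$ in $f(y_0)$, and this combination can be driven below $m_0$. Alternatively, you can run your three steps verbatim on the single-valued function $g = \inf f$, which is a genuine extended-real convex function with $g < +\infty$ on $Y \subset \domain f$ and $g(y_0) \geq m_0$: Step 1 gives $g > -\infty$ on $Y$, and Steps 2--3 give neighbourhood-uniform lower bounds on $g$, hence on $f$. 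That reinterpretation would actually yield slightly more than the paper's proof, which only establishes that each individual set $f(y)$, $y \in Y$, is bounded below. So the repair is a change of reading rather than a new idea, but as written the proof does not apply to the statement being claimed.
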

\begin{proof}
	By contradiction for $y, y_0 \in Y$; let $f$ be locally bounded from below at $y_0 \in Y$ with constant $m_0 > -\infty$ and neighbourhood $A \subset Y$. Assume that for any $m \in \mathbb R$ there exists $\mu \in f(y)$ with $\mu < m$. Take $y_\alpha = y_0 + \alpha (y_0 - y) \in A$ for some $\alpha > 0$ and pick $\mu_\alpha \in f(y_\alpha)$. Define $\beta = \alpha/(1+\alpha)$ and note that, in particular, there exists $\mu \in f(y)$ such that $\mu < (1 - \beta^{-1}) \mu_\alpha + \beta^{-1} m_0$. Since $f$ is convex and $\beta \in (0, 1)$, we have that $\beta \mu + (1 - \beta) \mu_\alpha \in f(\beta y + (1 - \beta) y_\alpha)$; note
\begin{gather*}
	\beta y + (1 - \beta) y_\alpha = y_0 \\
	\beta \mu + (1-\beta) \mu_\alpha < m_0
\end{gather*}
However, this contradicts the assumption that $m_0$ is a lower bound of $f(y_0)$; hence, $f(y)$ is bounded from below, the desired result.
\end{proof}

It is well known \cite{WayneStateUniversity1972} that a proper convex function from a finite-dimensional vector space onto the real line\footnote{N.B.: Under stronger assumptions, Lipschitzanity holds for mappings between partially ordered, infinite-dimensional vector spaces.} is locally Lipschitz continuous.

\begin{corollary}
	Let $Y \subset \mathcal Y$ be a non-empty open convex subset of $\domain F$ and $F_\phi$ be closed; if $F_\phi$ is locally bounded from below at $y_0 \in Y$, then $v$ is Lipschitz continuous at $y_0$. Moreover, $v$ is uniformly continuous on $Y$. $\lhd$
\end{corollary}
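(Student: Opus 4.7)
The plan is to chain together three earlier results from this section: the preceding lemma (to propagate local lower boundedness from the single point $y_0$ to all of $Y$), the preceding theorem (to obtain properness and convexity of $v$ on $Y$), and the cited local Lipschitz result for proper convex functions on finite-dimensional spaces.

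First I would apply the lemma with $f := F_\phi$. By the first proposition, $F_\phi$ has convex graph, and $Y \subset \domain F = \domain F_\phi$ is open and convex; since $F_\phi$ is locally bounded from below at $y_0 \in Y$ by hypothesis, the lemma extends this property to every point of $Y$. With $F_\phi$ closed and now locally bounded from below on all of $Y$, the preceding theorem yields $Y \subset \domain v$ and that $v$ is a proper convex function on $Y$. The cited real-analytic fact then immediately gives local Lipschitz continuity of $v$ throughout $Y$, which in particular establishes Lipschitz continuity at $y_0$.

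The step I expect to require the most care is the concluding claim of uniform continuity on $Y$. Local Lipschitz continuity on an open convex set is not by itself sufficient for global uniform continuity, since the local Lipschitz constant may blow up toward the boundary of $Y$. To close this gap I would appeal to the standard refinement of the cited result, namely that a proper convex function is Lipschitz on every relatively compact subset of the interior of its domain, with a constant controlled by the local bounds on a slightly larger compact neighbourhood. Combined with the local boundedness of $F_\phi$, and hence of $v$, on $Y$ established above, this yields uniform continuity of $v$ on each relatively compact subset of $Y$, which is the intended sense of the statement.
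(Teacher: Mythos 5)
Your derivation of the first claim is exactly the intended route: the paper states this corollary without proof, and the evident argument is the chain you give --- Proposition 1 makes $F_\phi$ a convex set-valued mapping, so the Lemma propagates local lower boundedness from $y_0$ to all of $Y \subset \domain F = \domain F_\phi$, the Theorem then yields that $v$ is a proper convex function on the open convex set $Y \subset \domain v$, and the cited result gives local Lipschitz continuity of $v$ on $Y \subset \interior \domain v$, hence at $y_0$. That part is correct and complete.

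The uniform-continuity claim is where the problem lies, and your instinct that local Lipschitz continuity on an open convex set does not yield uniform continuity there is right --- but your patch does not close the gap, it changes the conclusion. Uniform continuity on every relatively compact subset of $Y$ is strictly weaker than uniform continuity on $Y$, and the stronger statement is in fact false under the stated hypotheses. Take $n = m = 1$, $\phi(x, y) = x$ and $F(y) = [1/y, +\infty)$ for $y > 0$ (empty otherwise); then $\graph F$ is closed and convex, $F_\phi(y) = [1/y, +\infty)$ is closed-valued and bounded from below by $1$ on $Y = (0, 1)$, yet $v(y) = 1/y$ is not uniformly continuous on $(0,1)$. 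So no argument can establish the second sentence of the corollary as written; it requires an additional hypothesis such as $Y$ bounded with $\closure Y \subset \interior \domain v$ (equivalently, a uniform Lipschitz constant on $Y$), under which your compactness argument does go through. A secondary point: the ``local boundedness of $F_\phi$, and hence of $v$, on $Y$'' you invoke does hold, but not via the Lemma, whose proof only controls lower bounds; local upper boundedness of $v$ follows instead from the fact that $v$ is a finite convex function on the open set $Y$.
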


\end{document}